\begin{document}

%%%%%%%%%%%%%%%%%%%%%%%%%%%%%%%%%%%%%%%%%%%%%%%%%%%%%%%%%%%%%%%%%%%%%%%%%%%%%%%%%%%%%%%%%%%%%%%%%%%%%%%%%%%%%%%%%%%%%%
%%%%%%%%%%%%%%%%%%%%%%%%%%%%%%%%%%%%%%%%%%%%%%%%%%%%%%%%%%%%%%%%%%%%%%%%%%%%%%%%%%%%%%%%%%%%%%%%%%%%%%%%%%%%%%%%%%%%%%

\newcommand{\proj}{\pi}
\newcommand{\deck}{\varphi}
\newcommand{\C}{\mathbb C}
\newcommand{\R}{\mathbb R}
\newcommand{\PP}{\mathbb P}
\newcommand{\ZZ}{\mathbb Z}
\newcommand{\NN}{\mathbb N}
\newcommand{\lra}{\longrightarrow}
\newcommand{\CC}{{\mathbb C}}
\newcommand{\RR}{{\mathbb R}}
\def\OO{{\mathcal O}}
\def\caln{{\mathcal N}}
\newcommand\newop[2]{\def#1{\mathop{\rm #2}\nolimits}}
\newop\Big{Big}
\newop\vol{vol}
\newop\NS{NS}
\newop\supp{supp}

\title{When are Zariski chambers numerically determined?}
\author {S{\l}awomir Rams}
\thanks{Partial funding by NCN grant no. N N201 608040 is gratefully acknowledged (S. Rams).}
\subjclass[2010]
{Primary: {14C20};  Secondary {14J28}}
 \address{Institute of Mathematics, Jagiellonian University \\
 ul.~{\L}ojasiewicza~6 \\ 30-348 Krak\'ow \\ POLAND}
 \address{Institut fuer Algebraische Geometrie, Leibniz Universit\"at \\
Welfengarten~1 \\ 30167
 Hannover \\ GERMANY}
\email{slawomir.rams@uj.edu.pl and rams@math.uni-hannover.de}
\author{Tomasz Szemberg}
\thanks{Partially supported by NCN grant UMO-2011/01/B/ST1/04875 (T. Szemberg).}
\address{Instytut Matematyki UP\\
   ul. Podchor\c a\.zych 2\\
   PL-30-084 Krak\'ow, Poland}
\email{tomasz.szemberg@gmail.com}
%% 14J28 = K3 surfaces and Enriques surfaces
%% 14J25 = special surfaces
%% 14N25 = varieties of low degree
%% 14F35   fundamental groups
%%  znalezc w math sci net numer dywizorow
%% 14C20 Divisors, linear systems, invertible sheaves
%% 14J26 Rational and ruled surfaces
%% 14J27 Elliptic surfaces

\keywords{Zariski decomposition, big cone, Enriques surface.}

\begin{abstract}
   The big cone of every smooth projective surface $X$ admits the natural
   decomposition into Zariski chambers. The purpose of this note is to give a simple criterion for
   the interiors of all Zariski chambers on $X$ to be numerically determined Weyl
   chambers. Such a criterion generalizes the results of Bauer-Funke \cite{bf} on K3 surfaces to arbitrary smooth projective surfaces.
   In the last section, we study the relation between decompositions of the big cone and elliptic fibrations  on Enriques surfaces.
\end{abstract}

\maketitle

\newcommand{\D}{\displaystyle}
\newcommand{\vv}{\vspace{0.3cm}}
\newcommand{\vV}{\vspace{0.5cm}}
\newcommand{\VV}{\vspace{2cm}}

\theoremstyle{remark}
\newtheorem{obs}{Observation}
\newtheorem{rem}[obs]{Remark}

\theoremstyle{definition}
\newtheorem{defi}[obs]{Definition}
\newtheorem{definition}[obs]{Definition}
\newtheorem{remark}[obs]{Remark}
\newtheorem{example}[obs]{Example}
\newtheorem{prop}[obs]{Proposition}
\newtheorem{theo}[obs]{Theorem}
\newtheorem{theorem}[obs]{Theorem}
\newtheorem{lemma}[obs]{Lemma}
\newtheorem{cor}[obs]{Corollary}
\newtheorem*{THM}{Theorem}
% Theorem}
% {Theorem~\ref{t:main}}
\newcommand{\ux}{\underline{x}}

\section{Introduction}
The main purpose of this note is to study numerical properties
of the decomposition of the big cone of smooth projective surfaces into Zariski chambers, i.e. the decomposition
induced by the variation of the Zariski decomposition of divisors over the big cone.
Recall that, given a pseudo-effective $\R$-divisor $D$ on a smooth projective surface $X$,
there exist effective $\R$-divisors $P_D$ and $N_D$ such that
\begin{equation} \label{eq-zariski}
D=P_D+N_D
\end{equation}
and the following conditions are satisfied

\begin{itemize}
% \noindent
\item[(Z1)]  the divisor $P_D$ is nef;

% \noindent
\item[(Z2)] either $N_D=0$ or  $N_D=\sum_{i=1}^{s}\alpha_iC_i$ and the intersection matrix
$[C_i.C_j]_{i,j=1,\ldots,n}$ is negative-definite;

% \noindent
\item[(Z3)] one has $P_D.C_i=0$ for all $i=1,\ldots,s$.
\end{itemize}
\noindent
The divisor $P_D$ (resp. $N_D$) in \eqref{eq-zariski} is called the \emph{positive}
(resp. the \emph{negative}) part of $D$.
One can show (see \cite{zariski} or \cite{bauer} for a short proof in modern language) that
the \emph{Zariski decomposition} \eqref{eq-zariski} of the divisor $D$ is uniquely determined by the conditions (Z1), (Z2) and (Z3).
Moreover, all sections of $D$ come in effect from $P_D$, which can be expressed in terms
of the \emph{volume} $\vol(D)=\vol(P_D)$, (see \cite{PAG} for details).

Given an algebraic surface $X$, by  \cite[Theorem~1.2]{bks}, the variation of the Zariski decomposition over the big cone $\Big(X)$
leads to the \emph{Zariski  decomposition}  of  the cone $\Big(X)$. Indeed,
suppose that $P$ is a big and nef divisor. Recall the following definition (see \cite[p.~214]{bks}).

\begin{definition}[Zariski chamber] \label{def-zchamber}
The \emph{Zariski chamber} $\sum_P$ associated to $P$ is defined as
\begin{eqnarray*}
\textstyle{\sum_P} &:=&  \{ B \in \mbox{Big}(X) \, : \,  \mbox{irreducible components of }  N_B \mbox{ are the only irreducible curves} \\
                   &  &   \quad \mbox{ on } X \mbox{ that intersect  } P \mbox{ with multiplicity  } 0 \, \}.
\end{eqnarray*}
\end{definition}

\noindent
By \cite[Theorem~1.2]{bks}
Zariski chambers yield a locally finite decomposition of the %big
cone $\Big(X)$  %of $X$
   into locally polyhedral subcones such that the support of the
   negative part of the Zariski decomposition of all divisors in the subcone is constant.

On the other hand, it follows immediately from the property (Z2)
(see \eqref{eq-zariski}),   that the negative part $N_D$ of the Zariski decomposition is either
   trivial or its support consists  of \emph{negative} curves, i.e. curves with negative self-intersection.
   One can use such curves to define another decomposition of the big cone.
 Let $\caln(X)$ be the set of all \emph{irreducible} negative curves on $X$.
   Each curve $C\in\caln(X)$ defines the hyperplane in the N\'eron-Severi space $\NS_{\R}(X)$ of $X$
   $$C^{\perp}=\left\{D:\;D.C=0\right\}\subset\NS_{\R}(X),$$
   and the decomposition of the set
\begin{equation} \label{eq-bmh}
\Big(X) \setminus \bigcup\limits_{C\in\caln(X)}C^{\perp}
\end{equation}
into connected components yields a  decomposition
   of (an  open and dense subset of) the cone $\Big(X)$ into subcones.

\begin{definition}[simple Weyl chamber] Connected components of the set \eqref{eq-bmh}  are called
  \emph{simple Weyl chambers} of $X$.
\end{definition}
   Traditionally the (simple) Weyl chambers are defined if $X$ is a surface carrying only $(-2)$-curves
   as negative curves, see e.g. \cite{bf}. By a slight abuse of terminology we extend this definition to
   arbitrary surfaces and arbitrary negative curves.

   It is natural to compare the two chamber decompositions. % of the big cone.
Since the Zariski chambers
   need not in general be either open or closed, whereas Weyl chambers are by definition open, it is natural  to ask
\emph{under which condition is the interior of each Zariski chamber a simple Weyl chamber?}

   \noindent
   If it happens that all interiors of Zariski chambers coincide  with simple Weyl chambers,
   then we say that \emph{Zariski chambers are numerically
   determined} (by the sign of the intersection product with negative curves). A condition for Zariski chambers on K3 surfaces to be numerically determined was given in \cite[Theorem~1.2]{bf}.
   Here we prove the following criterion, that is valid for all smooth projective surfaces.
\begin{theorem}[A criterion for Zariski chambers to be numerically determined]\label{thm:main general}
   Let $X$ be a smooth projective surface. The following conditions are equivalent:
   \begin{itemize}
   \item[a)]  the interior of each Zariski chamber on $X$ is a simple Weyl chamber;
   \item[b)] if two  irreducible negative curves % $C_1$, $C_2$ $\subset$ $X$,
$C_1 \neq C_2$ on $X$ meet (i.e. $C_1.C_2 > 0$),  %such that $C_i.C_j \neq  0$
then
   $$
C_1.C_2 \geq \sqrt{C_1^2 \cdot C_2^2} \, \, \, .
   $$
   \end{itemize}
\end{theorem}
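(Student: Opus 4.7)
The plan hinges on reinterpreting condition (b) in terms of the Zariski decomposition: (b) is equivalent to the assertion that any two distinct meeting negative curves have a $2\times 2$ intersection matrix that fails to be negative definite. In view of (Z2), this is the same as saying that the support of any $N_B$ is a set of pairwise disjoint negative curves.

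For the direction (b) $\Rightarrow$ (a), I fix a Zariski chamber $\Sigma_P$ with null support $\{C_1,\dots,C_s\}$; by the observation above these curves are pairwise disjoint, so for any $B = P_B + \sum_i \alpha_i C_i$ in $\Sigma_P$ I compute $B.C_k = \alpha_k C_k^2 < 0$ for each $C_k$ in the support, and $B.C = P_B.C > 0$ for any other negative curve $C$ provided $B$ lies in the interior (so that $P_B$ is in the relative interior of the corresponding face of the nef cone). This shows $\mathrm{int}(\Sigma_P)$ is contained in a single simple Weyl chamber. For the converse inclusion, I check that every boundary point of $\Sigma_P$ meets $\bigcup_{C\in\caln(X)} C^\perp$: at a ``losing'' wall some $\alpha_i\to 0$ and disjointness yields $B.C_i \to \alpha_i C_i^2 \to 0$; at a ``gaining'' wall the hypothetical enlarged support would again have to be negative definite, forcing the new curve $C'$ to be disjoint from every $C_i$, so that $B.C' = P_B.C' + \sum_i \alpha_i C_i.C' \to 0$. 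A connectedness argument using the local polyhedrality from \cite{bks} then identifies each Weyl chamber with $\mathrm{int}(\Sigma_P)$ for a unique $P$.

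For the direction (a) $\Rightarrow$ (b) I argue contrapositively. Suppose $C_1,C_2$ violate (b). Fix a sufficiently generic ample class $H$ and consider, for $s,t>0$, the divisor $B = H+sC_1+tC_2$ on the line $B.C_1=0$ in the $(s,t)$-plane. A short calculation using $(C_1.C_2)^2 < C_1^2 C_2^2$ shows that for $(s,t)$ sufficiently far out on this line one has $B.C_2<0$. Nefness of $P_B$ then forces both $C_1$ and $C_2$ into $\mathrm{supp}(N_B)$ with positive coefficients, and the Zariski relations $P_B.C_i=0$ pin down the ratio $\alpha_2/\alpha_1 = |C_1^2|/(C_1.C_2)$. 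Hence $B\in C_1^\perp$ while lying in the interior of its Zariski chamber, contradicting (a).

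The main obstacle I expect is the last construction: I must check that for generic data the support of $N_B$ is exactly $\{C_1,C_2\}$ and that $B$ lies strictly in the interior of the corresponding chamber, so that it cannot be dismissed as a boundary point of a smaller chamber. This will require genericity of $H$ and a small perturbation analysis of the Zariski decomposition near $B$, but nothing beyond standard arguments. The direction (b) $\Rightarrow$ (a) is cleaner once the disjointness observation is in place; the only subtlety there is being careful that ``interior'' of $\Sigma_P$ really corresponds to $P_B$ lying in the relative interior of the appropriate nef face.
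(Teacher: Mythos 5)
Your proposal is correct and follows essentially the same route as the paper's proof: for b)$\Rightarrow$a) you use the identical argument (condition b) forces the support of every negative part to consist of pairwise disjoint curves, whence $D.C_i=\alpha_iC_i^2<0$ on the support and $D.C=P_D.C>0$ elsewhere, via \cite[Proposition~1.8]{bks}), and for a)$\Rightarrow$b) you perform the same explicit Zariski decomposition of divisors of the form $H+sC_1+tC_2$ --- indeed your positive part works out to a scalar multiple of the paper's nef divisor $T_1=(ab-c^2)H+(b(H.C_1)+c(H.C_2))C_1+(a(H.C_2)+c(H.C_1))C_2$. The only real difference is how the contradiction is extracted in a)$\Rightarrow$b): the paper produces two divisors $D_1,D_2$ in the same Zariski chamber lying in distinct Weyl chambers ($D_1.C_1<0<D_2.C_1$), while you produce a single interior divisor lying on the wall $C_1^{\perp}$; both are sound, and the genericity of $H$ you worry about is not needed, since for any ample $H$ the coefficients $s-\alpha_1=(b(H.C_1)+c(H.C_2))/(ab-c^2)$ and $t-\alpha_2=(c(H.C_1)+a(H.C_2))/(ab-c^2)$ are automatically positive, so $P_B$ is nef and positive on every curve other than $C_1,C_2$.
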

\begin{remark}\label{rem:cond b) explained}
   In practice the condition b) in Theorem~\ref{thm:main general} means that the support of the (non-trivial) negative
   part of the Zariski decomposition of every big divisor on $X$ consists
   of pairwise disjoint %negative
curves.
Indeed, the condition in question implies that if the intersection matrix % $[C_i.C_j]_{i,j=1,2}$
of two irreducible negative curves $C_1$, $C_2$ $\subset$ $X$ is negative-definite,
   then it is  diagonal.
\end{remark}
After proving  Theorem~\ref{thm:main general} in $\S$.\ref{sec-criterion},
 we study the relation between elliptic fibrations and Zariski chambers on Enriques surfaces in $\S$.\ref{enriques}.  It should be mentioned, that this note was motivated
 and inspired by the  earlier results of Bauer and Funke \cite{bf} on the K3 case.

\vspace*{1ex}
\noindent
{\em Convention:} In this note we work over the base field $\CC$. Elliptic fibrations are not assumed to have a section.
For basic facts on various types of divisors and cones associated to a smooth complex variety (resp. on elliptic fibrations) the reader should consult
\cite{PAG} (resp. \cite{shioda-schuett}).

\section{Proof of Theorem~\ref{thm:main general}} \label{sec-criterion}

\noindent
{\sl Proof of Theorem~\ref{thm:main general}.} The implication [a)$\Rightarrow$b)]: We argue by contraposition. Let $C_1$, $C_2$ be  negative curves on $X$ such that  $C_1.C_2 \neq 0$
 and the matrix $[C_i.C_j]_{i,j=1,2}$ is negative definite.
 To simplify our notation we put
   $$a:=-C_1^2,\;\; b:=-C_2^2,\;\; \mbox{ and }\; c:=C_1.C_2.$$
   Then we have $a, b, c >0$ and $ab>c^2$.

   We will construct two big divisors $D_1$ and $D_2$ such that
\begin{equation} \label{eq-npart}
\supp(N_{D_1})=\supp(N_{D_2})=\left\{C_1,C_2\right\},
\end{equation}
 the curves $C_1,C_2$ are the only irreducible curves on $X$ that meet the positive part $P_{D_1}$ (resp.  $P_{D_2}$) with multiplicity $0$,  and
\begin{equation} \label{eq-dweyl}
D_1.C_1<0,\; D_1.C_2<0\;\mbox{ but }\; D_2.C_1>0,\; D_2.C_2<0.
\end{equation}

   Let $H$ be an ample divisor on $X$. For $k \in \NN$ we define the following divisors
   $$T_k=(ab-c^2)H + k[(b(H.C_1)+c(H.C_2))C_1+(a(H.C_2)+c(H.C_1))C_2].$$
   Then, by direct computation, we have
   \begin{equation} \label{eq-tkc1c2}
    T_1.C_1=T_1.C_2=0 \quad  \mbox{ and }  \quad T_k.C_1 < 0,  T_k.C_2  < 0 \mbox{ for } k\geq 2 \, .
   \end{equation}
In particular $T_1$ is a nef divisor.  Moreover, by definition
\begin{equation} \label{eq-t1posint}
\mbox{for all irreducible  curves } C \subset X \mbox{ such that } C \neq C_1, C_2 \mbox{ we have } T_1.C > 0 \, .
\end{equation}

   Let $D_1=T_2$. Then
   $$D_1=T_1+[(b(H.C_1)+c(H.C_2))C_1+(a(H.C_2)+c(H.C_1))C_2]$$
   is the Zariski decomposition of $D_1$. Indeed, by \eqref{eq-t1posint} and \eqref{eq-tkc1c2} the divisor $T_1$ satisfies the conditions (Z1), (Z3).
   The choice of the curves $C_1$, $C_2$ implies that the condition (Z2) is satisfied.
   Since the Zariski decomposition of $D_1$ is uniquely determined by
(Z1)-(Z3),  the claim  follows.

   Finally, we define
   $$D_2:=(ab-c^2)H+(b(H.C_1)+c(H.C_2)+c)C_1+(a(H.C_2)+c(H.C_1)+2a)C_2.$$
   Then, by direct computation one gets  $D_2.C_1=ac>0$ and $D_2.C_2=c^2-2ab<0$.
   Moreover, the choice of the curves $C_1$, $C_2$ combined with  \eqref{eq-t1posint}, \eqref{eq-tkc1c2}
   implies that
   $$D_2=T_1+(cC_1+2aC_2)$$
   is the Zariski decomposition of $D_2$, so that $\supp(N_{D_2})=\left\{C_1,C_2\right\}$.

To complete the proof, observe that \eqref{eq-npart}
and  \eqref{eq-t1posint} yield that $D_1, D_2 \in \textstyle{\sum_{T_1}}$  (c.f. Definition~\ref{def-zchamber}).
On the other hand  \eqref{eq-dweyl} and \eqref{eq-t1posint}  yield that $D_1$ (resp. $D_2$)  belongs to a Weyl chamber (i.e. it does not belong to $C^{\perp}$
for an irreducible curve $C \subset X$). The two Weyl chambers in question do not coincide by \eqref{eq-dweyl}.

 \noindent
 The implication [b)$\Rightarrow$a)]:   The opposite implication is elementary.
 Let $D$ be a big divisor with Zariski decomposition   \eqref{eq-zariski} %  $D=P_D+N_D$.
 that belongs to the interior of the chamber $\textstyle{\sum_{P}}$ for a big and nef $P$.

By \cite[Proposition~1.8]{bks} the only irreducible curves  $C \subset X$ such that
$C.P_D =0$ are the components of the negative part $N_D$. In particular, one has
$$
D.C \geq P_D.C > 0 \mbox{ for every irreducible curve } C \subset X \mbox{ such that } C \nsubseteq \mbox{supp}(N_D).
$$
Moreover, \cite[Proposition~1.8]{bks} yields that the components of $\mbox{supp}(N_D)$ are precisely the irreducible curves
that meet $P$ with multiplicity zero.

   Recall that the
   support of $N_D$ consists of mutually \emph{disjoint} curves $C_1,\ldots,C_s$ (see Remark~\ref{rem:cond b) explained}).
   Consequently, one obtains
   $$D.C_i=(P_D+N_D).C_i= \alpha_i C_i^2  < 0,$$
   which completes the proof. $\mbox{}$ \hfill $\Box$

As an immediate consequence one obtains the following corollary, that is a direct generalization of \cite[Theorem~1.2]{bf}.

\begin{cor} \label{cor-kodairazero}
Let $X$ be a smooth projective surface with $\mbox{kod}(X)=0$. Then the following conditions are equivalent:
\begin{itemize}
\item[a)] the simple Weyl chambers are the interiors of Zariski chambers on $X$,
\item[b)] there is no pair $C_1$, $C_2$ of smooth rational curves on  $X$ such that $C_1.C_2 = 1$.
\end{itemize}
\end{cor}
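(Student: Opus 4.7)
The plan is to apply Theorem~\ref{thm:main general} and translate its condition (b) into the concrete form stated in (b) above, using the hypothesis $\kappa(X)=0$.

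The decisive preliminary input I would establish is that on a smooth projective surface $X$ with $\kappa(X)=0$, every irreducible negative curve is a smooth rational curve with $C^2 \in \{-1,-2\}$. I would prove this by invoking the Enriques--Kodaira classification: on the minimal model $X_{\min}$ the canonical class is numerically trivial, so adjunction $2p_a(C)-2 = C^2 + K_{X_{\min}} \cdot C$ forces every negative curve on $X_{\min}$ to be a smooth $(-2)$-curve; on a non-minimal $X$, the fact that $K_X$ is effective and supported on the exceptional locus of $X \to X_{\min}$, combined once more with adjunction and the constraint $K_X \cdot C \geq 0$ for $C$ not in that locus, restricts any additional negative curves to smooth rational $(-1)$- or $(-2)$-curves. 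I expect this reduction to the classification to be the main technical obstacle.

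Granted this, let $C_1 \neq C_2$ be two distinct negative curves on $X$ with $C_1 \cdot C_2 > 0$, and set $a = -C_1^2$, $b = -C_2^2 \in \{1,2\}$ and $c = C_1 \cdot C_2 \in \mathbb{Z}_{>0}$. The intersection matrix is negative-definite precisely when $c^2 < ab$; since $a,b \leq 2$, this forces $c=1$ together with $ab \geq 2$, and in each such case $c = 1 < \sqrt{ab}$. Hence Theorem~\ref{thm:main general}(b) fails exactly when $X$ carries a pair of smooth rational curves with $C_1 \cdot C_2 = 1$ in which at least one is a $(-2)$-curve. The remaining case of two $(-1)$-curves meeting at a single point can be ruled out on a surface with $\kappa(X)=0$: contracting one such $(-1)$-curve would make the image of the other a smooth rational curve of self-intersection $0$ on a $\kappa=0$ surface, which by a Riemann--Roch computation moves in a pencil and would force the contracted surface to be birationally ruled, contradicting $\kappa=0$.

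Combining these observations, condition (b) of Theorem~\ref{thm:main general} holds on $X$ if and only if no two smooth rational curves on $X$ meet with multiplicity $1$, which is exactly condition (b) of the corollary. Theorem~\ref{thm:main general} then yields the desired equivalence (a)$\Leftrightarrow$(b).
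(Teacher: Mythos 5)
Your overall route is the same as the paper's: reduce Corollary~\ref{cor-kodairazero} to Theorem~\ref{thm:main general} by pinning down the negative curves on a surface of Kodaira dimension zero. The paper does this in one line, recalling that the only negative curves on such an $X$ are $(-2)$-curves, which are smooth and rational; then $\sqrt{C_1^2\cdot C_2^2}=2$ for any meeting pair, so condition b) of the theorem fails exactly when two $(-2)$-curves (equivalently, by adjunction, two smooth rational curves) meet in a single point.

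The genuine problem with your version is the preliminary claim that on a non-minimal $X$ with $\mbox{kod}(X)=0$ every negative curve satisfies $C^2\in\{-1,-2\}$. This is false, and the adjunction argument you sketch does not prove it: from $2p_a(C)-2=C^2+K_X.C$ and $K_X.C\geq 0$ nothing bounds $C^2$ from below. Concretely, blow up a K3 surface at a point of a $(-2)$-curve: the strict transform is a smooth rational curve of self-intersection $-3$, and together with a second $(-2)$-curve meeting the first in two points it gives a negative-definite pair with $C_1.C_2=2<\sqrt{6}$, violating condition b) of Theorem~\ref{thm:main general} yet falling outside your case analysis $a,b\in\{1,2\}$, $c=1$. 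So for non-minimal surfaces your proof of the equivalence is incomplete (one would still have to show that every such configuration forces a pair of smooth rational curves meeting in one point -- in the example above the exceptional curve provides one, but that requires an argument). If one restricts to minimal $X$ -- which is the reading under which the paper's own one-line proof is valid -- your argument collapses to the paper's, and the discussion of two $(-1)$-curves meeting in a point becomes vacuous; the contraction/ruledness argument you give there is fine in itself, except that the Riemann--Roch step yielding a pencil needs $\chi(\OO_X)\geq 1$ (for $\chi(\OO_X)=0$ one instead notes that abelian and bielliptic surfaces carry no rational curves at all).
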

\begin{proof}  Recall that the only  irreducible curves on $X$ with negative self-intersection are $(-2)$-curves.
The latter are smooth and rational. Theorem~\ref{thm:main general} immediately yields the claim.
\end{proof}

We end this section with two examples: degree-$d$ hypersurfaces in $\PP^3$ for $d=3$ and $d \geq 4$.
\begin{example}
   Let $X_3$ be a smooth cubic surface in $\PP^3$. Obviously, the only negative curves on $X_3$
   are the $27$ lines $L$ and one has $L^2=(-1)$. Moreover, the intersection number of two lines never exceeds $1$. Thus Zariski chambers on $X_3$
   are numerically determined by  Theorem~\ref{thm:main general}.
   This follows also from \cite[Proposition 3.4]{bks}.
\end{example}
   The next example generalizes \cite[Proposition 3.1]{bf}.
\begin{example}\label{exp:deg d surface with lines}
   Let $X_d \subset \PP^3$ be a smooth surface of degree $d\geq 4$ which contains two
   intersecting lines $L_1, L_2$ (e.g. the degree-$d$ Fermat surface, see \cite{ssl}).
By adjunction,  we have
$$L_1^2 = L_2^2=(-d+2),$$
 so the condition b) in Theorem~\ref{thm:main general} is not
   satisfied for the intersection matrix of the lines in question. Hence,
   Zariski chambers on $X_d$ are not numerically determined.
\end{example}

%%%%%%%%%%%%%%%%%%%%%%%%%%%%%%%%%%%%%%%%%%%%%%%%%%%%%%%%%%%%%%%%%%%%%%%%%%%%%%%%%%%%%%%%%%%%%%%%%%%%%%%%%%%%%%%%%%%%%%%%%%%%%%%%%%%%%%%%%%%%%%%%%%%%%%%%%%%%%%%%%%%%%%%%%%%%%%%%%%%%%%%%%%%%%%%%%%%
%%%%%%%%%%%%%%%%%%%%%%%%%%%%%%%%%%%%%%%%%%%%%%%%%%%%%%%%%%%%%%%%%%%%%%%%%%%%%%%%%%%%%enriques
%%%%%%%%%%%%%%%%%%%%%%%%%%%%%%%%%%%%%%%%%%%%%%%%%%%%%%%%%%%%%%%%%%%%%%%%%%%%%%%%%%%%%%%%%%%%%%%%%%%%%%%%%%%%%%%%%%%%%%%%%%%%%%%%%%%%%%%%%%%%%%%%%%%%%%%%%%%%%%%%%%%%%%%%%%%%%%%%%%%%%%%%%%%%%%%%%%%%%%
\section{Zariski chambers on Enriques surfaces} \label{enriques}
   Let $X$ be an Enriques surface. Recall that for such surfaces the Weyl decomposition is given by irreducible $(-2)$-curves, i.e. {\sl simple roots}.
   A general Enriques surface carries no $(-2)$-curves, so the Zariski decomposition of $\mbox{Big}(X)$  becomes trivial. % in this case.
   An Enriques surfaces is called {\sl nodal} iff it contains a smooth rational curve. It is well-known that, if $\pi: Y \rightarrow X$ is the K3-cover of a  very general nodal Enriques surface $X$,
   then $Y$ is the resolution of a quartic symmetroid (see e.g. \cite{cossec-reye}). In particular,  we have $\rho(Y) = 11$, where
   $\rho(Y)$  stands for the Picard number of $Y$.

   The main aim of this section is to characterize  the nodal Enriques surfaces
   for which Zariski chambers are numerically determined in terms of elliptic fibrations.
   More precisely we will show the following theorem.
\begin{theo}\label{main}
   Let $X$ be an Enriques surface and let $\pi: Y \rightarrow X$ be its universal K3-cover. \\
   i) The following conditions are equivalent
   \begin{itemize}
      \item[a)] the simple Weyl chambers are the interiors of Zariski chambers on $X$,
      \item[b)] every fiber of every  elliptic fibration on $X$ has at most two components.
   \end{itemize}
\noindent
   ii) If none of the above conditions is satisfied, then we have  $\rho(Y) \geq 12$.
\end{theo}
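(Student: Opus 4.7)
\noindent\emph{Proof plan for Theorem~\ref{main}.}
Since $X$ is an Enriques surface, $\mathrm{kod}(X)=0$, so Corollary~\ref{cor-kodairazero} reformulates condition (a) as: $X$ carries no pair of distinct $(-2)$-curves $C_1\neq C_2$ with $C_1.C_2=1$. Both implications in (i) are established by contraposition against this reformulation.

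For (a)$\Rightarrow$(b) I would run through Kodaira's list of singular fibres. If an elliptic fibration on $X$ has a fibre with $\geq 3$ components, then its type is one of $I_n$ $(n\geq 3)$, $IV$, $I_n^*$, $IV^*$, $III^*$, $II^*$; in each of these types the dual graph contains an edge of multiplicity one, so that the fibre has two components $C_1,C_2$ with $C_1.C_2=1$, and hence (a) fails. (The fibre types $I_2$ and $III$, which also have more than one component, are ruled out here because they give intersection number $2$ between the two components.)

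For (b)$\Rightarrow$(a), given $(-2)$-curves $C_1,C_2$ on $X$ with $C_1.C_2=1$, the plan is to produce an elliptic fibration $\varphi:X\to \PP^1$ in which $C_1$ and $C_2$ appear as components of one and the same fibre. Since $\langle C_1,C_2\rangle\subset \mathrm{Num}(X)\cong U\oplus E_8$ is an $A_2$-sublattice, its orthogonal complement has signature $(1,7)$ and hence contains primitive isotropic vectors; using reflections in $(-2)$-curves orthogonal to both $C_1$ and $C_2$, such a vector can be moved into the nef cone. By the Cossec--Dolgachev correspondence between nef primitive isotropic classes and genus-one pencils on an Enriques surface (see \cite{cossec-reye}), the resulting class $f$ is (half of) the fibre class of some elliptic fibration $\varphi$. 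Because $f.C_i=0$ and $C_i^2<0$, each $C_i$ must be a component of a fibre of $\varphi$, and because $C_1.C_2>0$ these two fibres must coincide, so the fibre in question has $\geq 3$ components by the first step. This construction is the main obstacle of the argument: translating the purely numerical $A_2$-configuration into a genuine genus-one pencil with both curves in the same fibre is precisely where the Enriques-specific machinery of Cossec--Dolgachev is needed.

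For part (ii), assume (a) and (b) both fail, so that $(-2)$-curves $C_1,C_2$ with $C_1.C_2=1$ exist on $X$. Since $\pi:Y\to X$ is \'etale of degree two and $\PP^1$ admits no connected \'etale double cover, each $\pi^*C_i$ splits as a disjoint union $C_{i,1}\sqcup C_{i,2}$ of $(-2)$-curves on $Y$, exchanged by the deck involution $\sigma$; equivariance of $\sigma^*$ together with the identity $\pi^*C_1.\pi^*C_2=2$ forces, after relabelling, $C_{1,j}.C_{2,k}=\delta_{jk}$. Set $L:=\pi^*\mathrm{Num}(X)\otimes \mathbb{Q}$, a $\sigma^*$-fixed $10$-dimensional subspace of $\mathrm{Num}(Y)\otimes\mathbb{Q}$. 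Since $\sigma^*C_{1,1}=C_{1,2}\neq C_{1,1}$, the class $C_{1,1}$ does not lie in $L$, so $\dim(L+\mathbb{Q}C_{1,1})=11$. Writing a putative relation $C_{2,1}=\pi^*\mu+\alpha C_{1,1}$ with $\mu\in\mathrm{Num}(X)\otimes\mathbb{Q}$ and $\alpha\in\mathbb{Q}$, then applying $\sigma^*$ and summing yields $\mu=(C_2-\alpha C_1)/2$; combining the two identities $C_{2,1}.C_{1,1}=1$ and $C_{2,1}^2=-2$ (and the projection formula) collapses to the polynomial equation $\alpha^2+\alpha+1=0$, which has no rational root. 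Hence $C_{2,1}\notin L+\mathbb{Q}C_{1,1}$, and therefore $\rho(Y)\geq\dim(L+\mathbb{Q}C_{1,1}+\mathbb{Q}C_{2,1})\geq 12$.
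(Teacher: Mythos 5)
Your treatment of (i)[a)\,$\Rightarrow$\,b)] follows the paper, and part (ii) is fine, but the implication (i)[b)\,$\Rightarrow$\,a)] has a genuine gap at exactly the point you yourself flag as ``the main obstacle''. You assert that the primitive isotropic class $D$ in the orthogonal complement $M$ of $\mathrm{span}(C_1,C_2)$ can be moved into the nef cone ``using reflections in $(-2)$-curves orthogonal to both $C_1$ and $C_2$''. The results you invoke (Cossec--Dolgachev, or \cite[Lemma~VIII.17.4]{bpv}) only supply a composition of Picard--Lefschetz reflections ${\mathfrak s}_{E_1},\ldots,{\mathfrak s}_{E_k}$ in $(-2)$-curves $E_j$ that are negative against the successive transforms of $D$; nothing forces $E_j.C_1=E_j.C_2=0$. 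If some $E_j$ meets $C_1$ or $C_2$, the resulting nef isotropic class $f$ is no longer orthogonal to $C_1,C_2$, and your step ``$f.C_i=0$, hence $C_i$ is a fibre component'' collapses. This is precisely the difficulty the paper's argument is organized around: rather than keeping $D$ orthogonal to the curves, it transports $C_1$ and $C_1+C_2$ by the \emph{same} reflections, lifts to the K3-cover to check (via the splitting $E\mapsto E^{\pm}$, Riemann--Roch, and a choice of labelling) that ${\mathfrak p}_X(C_1)$ and ${\mathfrak p}_X(C_1+C_2)$ remain \emph{effective} and supported on a single fibre of $|2{\mathfrak p}_X(D)|$, and then uses ${\mathfrak p}_X(C_1).{\mathfrak p}_X(C_1+C_2)=-1$ to exclude irreducible and $I_2$-type fibres. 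You need some version of this, or an independent proof that $M$ contains a nef primitive isotropic class, to close the argument.

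Part (ii), by contrast, is correct and genuinely different from the paper. The paper passes to the Jacobian fibrations of $\varphi$ and $\tilde{\varphi}$, compares trivial lattices, treats the case of a reducible half-pencil separately, and applies the Shioda--Tate formula twice; you instead obtain $\rho(Y)\geq 12$ by pure lattice theory on the cover: $\pi^*\mathrm{Num}(X)\otimes\mathbb{Q}$ is $10$-dimensional and fixed by $\sigma^*$, the class $C_{1,1}$ is moved by $\sigma^*$ and so adds one dimension, and the incompatibility of $C_{2,1}.C_{1,1}=1$ with $C_{2,1}^2=-2$ rules out $C_{2,1}\in L+\mathbb{Q}C_{1,1}$. (Your displayed polynomial is off: the two identities give $\alpha=-\tfrac12$ and $\alpha^2=1$ respectively, which is already the desired contradiction; there is no single equation $\alpha^2+\alpha+1=0$.) Your route is more elementary, needs only the two curves supplied by the failure of (a) rather than the fibration supplied by the failure of (b), and is a nice alternative --- but it does not repair the gap in (i).
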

\begin{proof}
 i) The implication [a)$\Rightarrow$b)]: Suppose there is an elliptic fibration on $X$ with a singular fiber that has  at least three components.
By the Kodaira classifiction of singular fibers (see e.g. \cite[$\S$~4]{shioda-schuett}), the fiber in question contains two smooth rational curves $C_1$, $C_2$
that meet transversally in exactly one point. Corollary~\ref{cor-kodairazero}  completes
the proof. \\
The implication [b)$\Rightarrow$a)]: Suppose that a  Weyl chamber on $X$ is not the interior of any   Zariski chamber. Using Corollary~\ref{cor-kodairazero} we find  two $(-2)$-curves $C_1$, $C_2$ on $X$ such that
$$C_1.C_2 = 1.$$
Let $M$ be the orthogonal complement of $\mbox{span}(C_1, C_2)$ in the
lattice $\mbox{Num}(X) = E_8 \oplus U$, where $U$ stands for the %unimodular
 hyperbolic plane.
By definition and Hodge Index Theorem, $M$ is  a rank-$8$ lattice of index $(1,7)$. In particular, the intersection form  is indefinite, so
we can apply  Meyer Theorem (see e.g. \cite[Corollary~2 on p.~43]{serre}) to find  a primitive class
\begin{equation} \label{eq-isotropv}
D \in M \mbox{ such that } D^2 = 0.
\end{equation}
By \cite[Proposition~16.1~(ii)]{bpv} we have either $|D|\neq \emptyset$ or  $|-D|\neq \emptyset$. Thus we can assume that
$|D|\neq \emptyset$. % and  $|\opertorname{K}_X + D|\neq \emptyset$.

Recall that every  smooth rational curve $E$ on $X$ defines the Picard-Lefschetz reflection:
$$
{\mathfrak s}_{E} \, : \,  \mbox{H}^2(X,\ZZ) \ni D \mapsto D + (D.E) E \in \mbox{H}^2(X,\ZZ).
$$
Moreover, the counterimage of $E$ under $\pi$ decomposes into two disjoint smooth rational curves $E^{+}$, $E^{-}$.
Analogously, we have the  Picard-Lefschetz reflection  ${\mathfrak s}_{E^+}$  defined by $E^{+}$ on $\mbox{H}^2(Y,\ZZ)$ (see \cite[$\S$.~VIII.1]{bpv}).

By  \cite[Lemma~VIII.17.4]{bpv}, there exist smooth rational curves $E_1$, $\ldots$, $E_k$  on $X$ such that
for the composition of Picard-Lefschetz reflections ${\mathfrak p}_{X} :=  ({\mathfrak s}_{E_1} \circ    \ldots \circ {\mathfrak s}_{E_k}) $ we have
\begin{equation} \label{eq-halfpenc}
{\mathfrak p}_{X}(D) \mbox{ is a half-pencil of an elliptic fibration on } X \, .
\end{equation}
We put ${\mathfrak p}_{Y}  :=  ({\mathfrak s}_{E_1^{+}} \circ {\mathfrak s}_{E_1^{-}} \circ
\ldots \circ {\mathfrak s}_{E_k^{+}}  \circ {\mathfrak s}_{E_k^{-}})$.  As one can % easily
 check (see e.g. \cite[$\S$~2.3]{fourcusps}) we have
\begin{equation} \label{eq-plref}
{\mathfrak p}_{Y} \circ  \pi^{*} = \pi^{*} \circ   {\mathfrak p}_{X}  \mbox{ and } \pi_{*}({\mathfrak p}_{Y}(C_i^{+})) =  {\mathfrak p}_{X}(C_i)  \mbox{ for } i = 1, 2.
\end{equation}

To simplify our notation, we label the four curves $C_1^{\pm}$, $C_2^{\pm}$ on the K3 surface $Y$ in such  way   that $C_1^{+}.C_2^{+}= 1$. Since
$$
({\mathfrak p}_{Y}(C_i^{+}))^2 = -2, \mbox{ for } i= 1, 2, \, \, \,  \mbox{ and } \, \, \,
({\mathfrak p}_{Y}(C_1^{+}) + {\mathfrak p}_{Y}(C_2^{+}))^2 = -2,
$$
we can always assume that $|{\mathfrak p}_{Y}(C_1^{+})| \neq \emptyset$ and
$|{\mathfrak p}_{Y}(C_1^{+}+C_2^{+})| \neq \emptyset$. Recall that Picard-Lefschetz reflections are isometries. Thus,  from \eqref{eq-plref}  and \eqref{eq-halfpenc} we infer  that
${\mathfrak p}_{X}(C_1+C_2)$,  ${\mathfrak p}_{X}(C_1)$
are  effective divisors on $X$,  and their supports are  contained in a fiber of the elliptic fibration
given by $|2 {\mathfrak p}_{X}(D)|$.
Finally, the equality
$$
{\mathfrak p}_{X}(C_1).{\mathfrak p}_{X}(C_1 + C_2) = -1
$$
implies that the fiber in question is reducible, but it cannot be of the Kodaira type I$_2$.
The Kodaira classification of singular fibers  (see e.g. \cite[$\S$~4]{shioda-schuett}) completes the proof.

\noindent
ii) Let $\varphi: X \rightarrow \PP_1$ be an elliptic fibration on the Enriques surface $X$ and let $P_1, P_2$ be the images of half-pencils under $\varphi$.
Then, the K3-cover $Y$ is endowed with an  elliptic fibration  $\tilde{\varphi}$  such that the following diagram commutes:
\begin{equation} \label{firstdiagram}
\begin{xy}
\xymatrix{Y   \ar[r]^{2:1} \ar[d]_{\tilde{\varphi}} & X \ar[d]_{\varphi} \\
\PP_1  \ar[r]^{2:1}               & \PP_1
}
\end{xy}
\end{equation}
where the double cover $\PP_1 \rightarrow \PP_1$ is branched over the points $P_1, P_2$.
Thus \eqref{firstdiagram} induces the  commutative diagram
\begin{equation}
\begin{xy} \label{seconddiagram}
\xymatrix{\mbox{Jac}(Y)   \ar[r]^{2:1} \ar[d]_{\tilde{\psi}} & S \ar[d]_{\psi} \\
\PP_1  \ar[r]^{2:1}               & \PP_1   }
\end{xy}
\end{equation}
where $S := \mbox{Jac}(X)$ is a rational elliptic surface. In particular, we have $\rho(S) = \rho(X) = 10$, and
the elliptic fibrations $\varphi$ and $\psi$ have  singular fibers of the same Kodaira type.

Let $\mbox{T}_S$ be the trivial lattice of $S$ (see e.g. \cite[$\S$~6.4]{shioda-schuett}) and let $t := (\mbox{rank}(\mbox{T}_S) - 2)$.
Since we assumed a singular fiber of $\varphi$ to have at least $3$ components, we have  $t \geq 2$.

From the Shioda-Tate formula we get:
$$
10 = \rho(S) = 2 + t + \mbox{rank}(\mbox{MW}(S)),
$$
which yields $\mbox{rank}(\mbox{MW}(\mbox{Jac}(Y))) \geq \mbox{rank}(\mbox{MW}(S)) = (8 - t)$.

Assume that both half-pencils of the considered elliptic fibration are irreducible.
Then, each singular fiber of the fibration $\psi$ induces two singular fibers of the same type of the fibration $\tilde{\psi}$, so
for the trivial lattice of $\mbox{Jac}(Y)$ we get
$\mbox{rank}(\mbox{T}_{\operatorname{Jac}(Y)}) = 2 + 2t$.
Finally, we can apply the Shioda-Tate  formula on $\mbox{Jac}(Y)$  to get
% \rho(\operatorname{Jac}(Y)) \geq   (8-t) + 2 + 2t = 10 + t
\begin{equation} \label{eq-picjacy}
\rho(\operatorname{Jac}(Y)) \geq   10 + t  \geq 12 .
\end{equation}
If a half-pencil of $\varphi$ is reducible, then it is a fiber of the Kodaira type $I_k$. Thus it induces
an $I_{2k}$-fiber of  the fibration $\tilde{\psi}$ (see e.g. \cite[$\S.$~5.2]{shioda-schuett}). In particular, again we arrive at
\eqref{eq-picjacy}. \\
Finally, the equality $\rho(Y) = \rho(\operatorname{Jac}(Y))$ completes the proof.
\end{proof}

It should be emphasized, that the analogous statement does not  hold for elliptic K3 surfaces,  as the following example shows.
\begin{example}  \label{ex-K3}
(c.f. \cite[$\S$.~3]{bf}) Let $Y_4 \subset \PP_3(\CC)$ be a smooth quartic surface, such that

\begin{itemize}
\item[i)] a plane cuts the quartic $Y_4$ along a conic $C$ and two lines   $l', l''$,

\item[ii)] the Picard group $\mbox{Pic}(Y_4)$ is generated by $C$, $l'$, $l''$.
\end{itemize}
Obviously, the  line $l'$ (resp. $l''$) defines the elliptic fibration
$|{\mathcal O}_{Y_4}(1) - l'|$ (resp. $|{\mathcal O}_{Y_4}(1) - l''|$), but such a fibration has a unique reducible fiber and the latter is of the Kodaira type $I_2$.

Moreover, by \cite[Proposition~3.1~(ii)]{bf} the curves $C$, $l'$, $l''$ are the only $(-2)$-curves on $Y_4$. Since the conic $C$ meets each line with multiplicity two,
{\sl no fiber of an elliptic fibration on $Y_4$ has more than two components}.
On the other hand, the {\sl Zariski  and Weyl decompositions on $Y_4$ do not coincide} by \cite[Proposition~3.1~(iv)]{bf}.
\end{example}

\begin{remark}\label{rem:k3}
i) The K3 surface of Example~\ref{ex-K3} satisfies  the condition $\rho(Y_4) = 3$. An analysis of the proof  of  Theorem~\ref{main} (see \eqref{eq-isotropv}) shows that no
similar example with a K3 surface of Picard number $\geq 7$ can be constructed because one can use Meyer Theorem and Picard-Lefschetz reflections again. \\
ii) Obviously, given an elliptic K3 surface with a section and a reducible fiber of the elliptic fibration in question, \cite[Theorem~1.3]{bf} implies that Zariski chambers are not numerically determined.
\end{remark}

\vskip3mm
\noindent
{\bf Acknowledgement:} The first author would like to thank Th.~Bauer, M.~Joumaah and
M.~Sch\"utt for useful discussions.


\begin{thebibliography}{ACGH}

\bibitem{bauer}  Bauer, T.: \emph{A simple proof for the existence of Zariski decompositions on surfaces.} J. Algebraic Geom.~18 (2009), no. 4, 789-793.

\bibitem{bf}  Bauer, T., Funke, M.: \emph{Weyl and Zariski chambers on K3 surfaces.} Forum Math. 24 (2012), no. 3, 609-625.


\bibitem{bks}  Bauer, T., K\"uronya, A., Szemberg, T.:\emph{Zariski chambers, volumes, and stable base loci.} J. Reine Angew. Math.~576 (2004), 209-233.

\bibitem{bpv}  Barth, W., Hulek, K., Peters, C., van de Ven, A.:
\emph{Compact complex surfaces.} Second
edition, Erg. der Math. und ihrer Grenzgebiete, 3. Folge, Band 4. Springer, Berlin, 2004.

\bibitem{cossec-reye}  Cossec, F. R.: \emph{Reye congruences.} Trans. Amer. Math. Soc. 280 (1983), no. 2, 737--751.

\bibitem{cossec-dolgachev} Cossec, F. R., Dolgachev, I. V.: \emph{Enriques surfaces. I.} Progress in Math., vol.~76. Birkh\"auser, Basel,1989.

\bibitem{PAG}  Lazarsfeld, R.: \emph{Positivity in Algebraic Geometry~I, II}. Springer-Verlag, 2004.

\bibitem{fourcusps} Rams S., Sch\"utt M.: \emph{On Enriques surfaces with four cusps.} 2014, preprint arXiv:1404.3924.

\bibitem{shioda-schuett}  Sch\"utt, M., Shioda, T.:
\emph{Elliptic surfaces.},
Algebraic geometry in East Asia - Seoul 2008,
Advanced Studies in Pure Math.~60 (2010), 51-160.


\bibitem{ssl}  Sch\"utt, M., Shioda, T., van Luijk, R.: \emph{Lines on Fermat surfaces}. J. Number Theory 130 (2010), no. 9, 1939-1963.

\bibitem{serre} Serre, J.-P.: \emph{A course in arithmetic.} Graduate Texts in Mathematics, No. 7. Springer-Verlag, New York-Heidelberg, 1973.

\bibitem{zariski}  Zariski, O.: \emph{The theorem of Riemann-Roch for high multiples of an effective divisor on an
algebraic surface.} Ann. Math. 76, 560-615 (1962)


\end{thebibliography}
\end{document}